\newcommand{\RR}{\mathbb{R}}
\newtheorem{algorithm}[theorem]{Algorithm}
\journalname{Structural and Multidisciplinary Optimization}
\begin{document}
\title{On Robustness Criteria and Robust Topology Optimization with Uncertain
Loads\thanks{This research was supported
by the EU FP7 project AMAZE.} }
%%%%\subtitle{Do you have a subtitle?\\ If so, write it here}
%
\titlerunning{Robust Topology Optimization with Uncertain Loads}  % if too long for running head
\author{Michal Ko\v{c}vara}
%
%\authorrunning{Short form of author list} % if too long for running head
%
\institute{ M. Ko\v{c}vara \at  School of Mathematics, The University
of Birmingham, Birmingham B15 2TT, Great Britain, and Institute of
Information Theory and Automation, Academy of Sciences of the Czech
Republic, Pod
vod\'arenskou v\v{e}\v{z}\'{\i}~4, 18208 Prague 8\\
\email{m.kocvara@bham.ac.uk} }
\date{Received: date / Revised: date}
% The correct dates will be entered by the editor
%
\maketitle
\begin{abstract}
We propose a new algorithm for the solution of the robust multiple-load
topology optimization problem. The algorithm can be applied to any type
of problem, e.g., truss topology, variable thickness sheet or free
material optimization. We assume that the given loads are uncertain and
can be subject to small random perturbations. Furthermore, we define a
rigorous measure of robustness of the given design with respect to
these perturbations. To implement the algorithm, the users only need
software to solve their standard multiple-load problem. Additionally,
they have to solve a few small-dimensional eigenvalue problems.
Numerical examples demonstrate the efficiency of our approach.
\keywords{Topology optimization \and
          Robust optimization}
\subclass{74P05 \and 62K25 \and 90C31}
\end{abstract}
\bibliographystyle{spbasic}
\section{Introduction}\label{sec:1}
This article has been motivated by the following sentence of an
engineer in an industrial company: ``When we use off-the-shelf topology
optimization software, we always consider not only the nominal loads
but also their angular perturbations by up to 30 degrees.'' The goal of
this article is to automatize this heuristics and to give rigorous
measures of robustness of a structure with respect to these
perturbations.

Robust topology optimization (in fact, any robust optimization problem)
can be approached from two different angles---a stochastic one and a
deterministic one. Most of the existing literature deal with the
stochastic approach
\citep[e.g.][]{evgrafov2003stochastic,doltsinis2004robust,conti2009shape}.
The deterministic (worst case) approach has been pioneered by Ben-Tal,
Nemirovksi and El Ghaoui
\citep{ben1997robust,ben-tal-nemirovski,el1997robust,ben2009robust}. In
their monograph, \cite{ben-tal-nemirovski} defined the concept of a
robust counterpart to a nominal (convex) optimization problem, where
the problem data is assumed to live in an uncertainty set.
\cite{ben-tal-nemirovski} showed that if the uncertainty set is an
ellipsoid, then the robust counterpart (a semi-infinite optimization
problem) can be formulated as a computationally tractable convex cone
optimization problem. In the same monograph, they presented explicit
formulations of robust counterparts for the truss topology and the free
material optimization problems with uncertainty in the loadings.
Unfortunately, these problems (typically large-scale linear
semidefinite optimization problems) are just too large to be
computationally tractable in practical situations. For this reason, in
\cite*{kocvara-zowe-nemirovski} we have developed a so-called cascading
technique that reduces the dimension of the robust counterpart
significantly. This article makes an attempt to go one step further in
bringing the solution of the robust topology optimization problem
closer to use in engineering practice.

After introducing the notation and the standard multiple-load topology
optimization problem in Section~\ref{sec:2}, we describe the main idea
of our approach and the corresponding algorithm in Section~\ref{sec:3}.
Section~\ref{sec:ex} is devoted to numerical experiments.

In the article we use standard notation for vectors and matrices: $x_i$
is the $i$-th element of vector $x\in\RR^n$ and $A_{ij}$ an $(i,j)$-th
element of matrix $A\in\RR^{n\times m}$. If $I\subset\{1,2,\ldots,n\}$,
$J\subset\{1,2,\ldots,n\}$ are sets of indices, then $x_I$ is a
subvector of $x$ with indices from $I$ and $A_{IJ}$ a submatrix of $A$
with row indices from $I$ and column indices from $J$. For $x\in\RR^n$,
$\|x\|$ denotes the Euclidean norm of $x$.

\section{Problem definition}\label{sec:2}
%\subsection{Basic notations}\label{sec:2.1}
%
We consider a general mechanical structure, discrete or discretized by
the finite element method. The number of members or finite elements is
denoted by $m$, the total number of ``free'' degrees of freedom (i.e.,
not fixed by Dirichlet boundary conditions) by $n$.

For a given set of $L$ (independent) load vectors
\begin{equation}\label{eq:fneq0}
f^{(\ell)}\in\RR^n,\;\;f^{(\ell)}\neq 0,\qquad \ell=1,\ldots,L,
\end{equation}
the structure should satisfy linear equilibrium equations
\begin{equation}\label{eq:eleq0}
 K(x) u^{(\ell)} = f^{(\ell)}, \qquad \ell=1,\ldots,L.
\end{equation}
Here $K(x)$ is the stiffness matrix of the structure, depending on a
design variable $x$.

We do not assume any particular structure of $K(x)$ or its dependence
on $x$. Therefore, the problem formulations and the conclusions apply
to a broad class of problems, e.g., the truss topology optimization,
variable thickness sheet, SIMP and free material optimization (see,
e.g., \citet{bendsoe}). All we need is software for the solution of the
specific multiple-load problem. Consequently, the design variables
$x\in\RR^m$, $x\geq 0$, represent, for instance, the thickness,
cross-sectional area or material properties of the element.

Let
\[
X:=\{x\in\RR^m\mid \sum\limits_{i=1}^mx_i  \leq v;\
\underline{x}\leq x_i \leq \overline{x},\ i=1,\ldots,m\}
\]
be the set of feasible design variables with some
$v,\underline{x},\overline{x}\in\RR$, $v>0$ and $0\leq\underline{x}\leq
\overline{x}$ (again, the specific form of this set is not important
for our purposes). The standard formulation of the worst-case
multiple-load topology optimization problem reads as follows:
\begin{alignat}{2}\label{eq:minc}
     &\min_{x\in X,u\in\RR^{L\cdot n}} \max_{\ \ell=1,\ldots,L\ } (f^{(\ell)})^Tu^{(\ell)} \\
     &\mbox{subject to} \notag\\
     &\qquad K(x) u^{(\ell)} = f^{(\ell)},\quad\ell=1,\ldots,L\,.\notag
\end{alignat}
%
%\begin{alignat}{2}\label{eq:minc1}
%     &\min_{x\in\RR^m,u\in\RR^{L\cdot n},\gamma\in\RR} \gamma \\
%     &\mbox{subject to} \notag\\
%     &\qquad K(x) u^{(\ell)} = f^{(\ell)},&\quad\ell&=1,\ldots,L\notag\\
%     &\qquad f_\ell^T u_\ell  \leq \gamma,                   &\quad\ell&=1,\ldots,L\notag\\
%     &\qquad \sum\limits_{i=1}^mx_i  \leq v\notag\\
%     &\qquad 0\leq \underline{x}\leq x_i \leq \overline{x},  &\quad   i&=1,\ldots,m \,.\notag
%\end{alignat}
%
To simplify our notation, we will instead consider the following
``nested'' formulation
\begin{alignat}{2}\label{eq:ml}
     &\min_{x\in X} \max_{\ \ell=1,\ldots,L\ }
     (f^{(\ell)})^T K(x)^{-1}f^{(\ell)}\,,
\end{alignat}
where, in case of $K(x)$ singular, we consider the generalized
Moore-Penrose inverse of the matrix. Note that, for the numerical
treatment, one would use the equivalent formulation
\begin{alignat}{2}\label{eq:ml1}
     &\min_{x\in X,\gamma\in\RR} \gamma \\
     &\mbox{subject to} \notag\\
     &\qquad (f^{(\ell)})^T K(x)^{-1}f^{(\ell)}  \leq \gamma, \quad\ell=1,\ldots,L\,.\notag
\end{alignat}
In the following, we will use formulation (\ref{eq:ml}). This is just
for the sake of keeping the notation fixed. In practical
implementation, the users can use any multiple-load formulation
implemented in their software.

\section{Robust topology optimization}\label{sec:3}
\subsection{General approach}
In their ground-breaking theory of robust convex optimization
\cite{ben-tal-nemirovski} define a \emph{robust counterpart} to a
nominal convex optimization problem in the worst-case sense. The
solution of the robust problem should be feasible for \emph{any}
instance of the random data and the optimum is attained at the maximum
of the objective function over all these instance.
\cite{ben-tal-nemirovski} show that if the data of the problem
(vectors, matrices) lie in \emph{ellipsoidal uncertainty sets}, the
robust counterpart---essentially a semi-infinite optimization
problem---can be converted into a numerically tractable (solvable in
polynomial time) convex optimization problem.

Specifically, if we assume uncertainty in the loads of our topology
optimization problem (\ref{eq:ml}), the robust counterpart is defined
as
\begin{equation}\label{eq:robbtn}
  \min_{x\in X} \max_{\ \ell=1,\ldots,L\ } \max_{f\in {\cal U}_\ell}
  f^T K(x)^{-1}f\,.
\end{equation}
where
\begin{equation}\label{eq:robbtn_u}
  {\cal U}_\ell :=\left\{ f\mid \exists g\in\RR^p,\
  \|g\|\leq 1: f=f_0^{(\ell)} + \sum_{i=1}^p g_i f_i^{(\ell)}  \right\}\,;
\end{equation}
here $f_0^{(\ell)}$ are the nominal loads and $f_i^{(\ell)}\in\RR^n,\
i=1,\ldots,p$, define an ellipsoid around $f_0^{(\ell)}$.
\cite{ben-tal-nemirovski} have shown that (\ref{eq:robbtn}) with the
uncertainty set (\ref{eq:robbtn_u}) can be formulated as a linear
semidefinite optimization problem. Unfortunately, in the context of
topology optimization, the dimension of this problem may be very large:
basically, it is the number of the finite element nodes times the space
dimension.

To avoid the problem of the prohibitive dimension, in
\cite{kocvara-zowe-nemirovski} we have proposed a \emph{cascading
algorithm} that leads to an approximate solution of the original robust
problem. The idea is to find only the ``most dangerous'' incidental
loads and to solve the robust problem only with these dangerous loads,
ignoring the others. In this article, we took inspiration from
\cite{kocvara-zowe-nemirovski}; however, we have substantially modified
the uncertainty sets which also leads to a modification of the
algorithm. Our goal was to get closer to engineering practice and to
make the approach usable for practitioners.

\subsection{Uncertainty set}
In \citet{kocvara-zowe-nemirovski} we have considered random
perturbations of loads at any free node of the finite element mesh (or
truss). This leads not only to very large dimensional robust
counterparts but also to practical difficulties when a perturbation
force can be applied to a node that would not normally be a part of the
optimal structure.

In this article we are motivated by the practice when, instead of
considering only the nominal loads, the engineers also apply these very
loads but in slightly perturbed directions. Our goal is to automatize
this heuristics and to give rigorous measures of robustness of a
particular design with respect to these perturbations.

Consider the multiple-load topology optimization problem (\ref{eq:ml})
with loads $f^{(\ell)}$, $\ell=1,\ldots,L$. We assume that the loads
are applied at certain nodes, either the nodes of the truss ground
structure or nodes of the finite-element discretization. Each node
$\nu_i$, $i=1,\ldots,N$, is associated with $d$ degrees of freedom
$\nu_{i_1},\ldots,\nu_{i_d}$. Typically, $d$ is equal to the spatial
dimension of the problem. As we have $n$ degrees of freedom, we assume
that they can be order such that
\[
  \{\nu_{1_1},\ldots,\nu_{1_d},\nu_{2_1},\ldots,\nu_{2_d},\ldots\ldots,
  \nu_{N_1},\ldots,\nu_{N_d}\} =\{1,\ldots,n\}\,.
\]

For each $f^{(\ell)}$ we find the set of indices of nodes with at least
one non-zero component of $f_0^{(\ell)}$
\[
  \hat{I}_\ell:=\{i\mid (f_0^{(\ell)})_{\nu_{i_j}}\not= 0\quad\mbox{for some}\ j=1,\ldots,d\}\,,
\]
the set of the corresponding degrees of freedom
\begin{equation}\label{eq:I}
  I_\ell:=\{k\mid k=\nu_{i_j},\ i\in \hat{I}_\ell,\ j=1,\ldots,d\}
\end{equation}
and its complement in $\{1,\ldots,n\}$:
\begin{equation}\label{eq:J}
  J_\ell:=\{1,\ldots,n\}\setminus I_\ell\,.
\end{equation}

Assume that instead of knowing each of the loads $f^{(\ell)}$ exactly,
we only know that they lie in an ellipsoid around some \emph{nominal
loads} $f_0^{(\ell)}$, $\ell=1,\ldots,L$:
\begin{equation}\label{eq:unc}
  f^{(\ell)} = f_0^{(\ell)}+P_\ell g,\quad \|g\|\leq 1,\quad g_i=0 \mbox{~if~}i\in J_\ell
\end{equation}
where $P_\ell$ is a symmetric and positive semidefinite matrix with
$(P_\ell)_{ij}=0$ if either $i\in J_\ell$ or $j\in J_\ell$. The choice
of $P_\ell$ is discussed below.

%Typically, we would choose $\tau_\ell = 0.1\|f^{(\ell)}\|$ or
%$\tau_\ell = 0.3\|f^{(\ell)}\|$.

\paragraph{Choice of $P_\ell$}
Consider a nominal load $f_0^{(\ell)}$. Notice first the second part of
the definition (\ref{eq:unc}) concerning the zero components of the
perturbation vector $g$. This means that the perturbed load
$f^{(\ell)}$ is only applied at the same nodes as the nominal load
$f_0^{(\ell)}$. Matrix $P_\ell$ defines the neighbourhood of
$f_0^{(\ell)}$ in which we can expect the random perturbations. Denote
by $\tilde{P}_\ell$ the restriction $(P_\ell)_{I_\ell I_\ell}$. The
choice
\[
  \tilde{P}_\ell = \tau I
\]
defines a ball of radius $\tau$ around $f_0^{(\ell)}$, see
Fig.~\ref{fig:unc}-left. If we want to consider significant angular
perturbation of $f_0^{(\ell)}$ but just a small perturbation in its
magnitude, we would chose $P_\ell$ to define a flat ellipsoid. For
instance, if
\[
  \tilde{P}_\ell = \begin{bmatrix}1.0\cdot 10^{-3}&0\\0&1\end{bmatrix}\quad
  \mbox{for~} f_0^{(\ell)}= (10,\ 0)^T
\]
or, generally,
\[
  \tilde{P}_\ell = T^T\begin{bmatrix}1.0\cdot 10^{-3}d&0\\0&d\end{bmatrix}T\quad
  \mbox{for~} f_0^{(\ell)}= (a,\ b)^T
\]
where $T$ is the rotation matrix for an angle defined by $f_0^{(\ell)}$
\[
  T=\begin{bmatrix}\cos\phi&\sin\phi\\-\sin\phi&\cos\phi\end{bmatrix},\quad\phi=\arctan(b/a)
\]
and $d=\tau\|f_0^{(\ell)}\|$; see Fig.~\ref{fig:unc}-right.
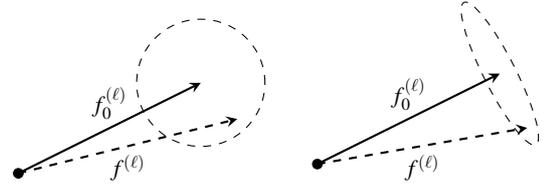
\begin{figure}[h]
\begin{center}
\begin{tikzpicture}
  [scale=1.2,auto=left]
  \draw[fill=black] (0,0) circle (1.5pt);
  \draw[thick,->, >=stealth] (0,0) -- node[above] {$f_0^{(\ell)}$} (2,1) ;
  \draw[dashed] (2,1) circle(20pt);
  \draw[thick,dashed,->, >=stealth ] (0,0) -- node[below] {$f^{(\ell)}$} (2.4,0.6) ;
  %\node at (.9,.8) {$f_0^{(\ell)}$};
  %\node at (1.1,0) {$f^{(\ell)}$};
\end{tikzpicture}\qquad
\begin{tikzpicture}
  [scale=1.2,auto=left]
  \draw[fill=black] (0,0) circle (1.5pt);
  \draw[thick,->, >=stealth] (0,0) -- node[above] {$f_0^{(\ell)}$} (2,1) ;
  \draw[dashed,rotate around={26.6:(2,1)}] (2,1) ellipse (5pt and 25pt);
  \draw[thick,dashed,->, >=stealth ] (0,0) -- node[below] {$f^{(\ell)}$} (2.3,0.4) ;
  %\node at (.9,.8) {$f_0^{(\ell)}$};
  %\node at (1.1,0) {$f^{(\ell)}$};
\end{tikzpicture}
\end{center}
\caption{The nominal load $f_0^{(\ell)}$ and its perturbation $f^{(\ell)}$ for a
circular (left) and ellipsoidal (right) uncertainty set}
\label{fig:unc}
\end{figure}

\subsection{Robust counterpart}
We are now ready to give the definition of the robust counterpart.
\begin{definition}
Consider the multiple-load topology optimization problem (\ref{eq:ml})
with nominal loads $f_0^{(\ell)}$, $\ell=1,\ldots,L$. Define
\begin{equation}
  {\cal G}_\ell :=\{g\in\RR^n\mid \|g\|\leq 1,\ g_i=0 \mbox{~if~}i\in J_\ell\}\,.
\end{equation}
The \emph{robust counterpart} to problem (\ref{eq:ml}) is defined as
\begin{align}\label{eq:rob}
  \min_{x\in X} \max_{\ \ell=1,\ldots,L\ } \max_{g\in{\cal G}_\ell}
  (f_0^{(\ell)}+P_\ell g)^T K(x)^{-1}(f_0^{(\ell)}+P_\ell g)\,.
\end{align}
\end{definition}
So for each load case we consider the worst-case scenario, the ``most
dangerous'' load from the ball around $f_0^{(\ell)}$.

Notice that up to this point we followed the general theory by
\cite{ben-tal-nemirovski}. From now on, we will use the specific form
of the uncertainty set.
In the following, we will show that the most-inner optimization problem
in (\ref{eq:rob}) can be easily solved.

Assume that $x$ and $\ell$ are given. First we find the index sets
$I_\ell$ and $J_\ell$ from (\ref{eq:I}), (\ref{eq:J}). Now we compute
the Schur complement of the inverse stiffness matrix
\begin{equation}\label{eq:schur}
  S^{(\ell)} = K(x)^{-1}_{I_\ell I_\ell} -
  K(x)^{-1}_{J_\ell I_\ell} (K(x)^{-1}_{J_\ell J_\ell})^{-1} K(x)^{-1}_{I_\ell J_\ell}\,.
\end{equation}
We get the obvious statement:
\begin{lemma}\label{th:l1a}
Let $x$ and $\ell$ be given and denote by $\tilde{f} =
(f_0^{(\ell)}){\!_{I_\ell}}\,$. Then
\begin{align}
&\max_{g\in{\cal G}_\ell}
  (f_0^{(\ell)}+P_\ell g)^T K(x)^{-1}(f_0^{(\ell)}+P_\ell g)\label{eq:gmax}\\ &\quad=
\max_{\tilde{g}\in\RR^{|I_\ell|}:\|\tilde{g}\|\leq 1}
  (\tilde{f}+\tilde{P}_\ell \tilde{g})^T S^{(\ell)}(\tilde{f}+\tilde{P}_\ell \tilde{g})\,.\nonumber
\end{align}
\end{lemma}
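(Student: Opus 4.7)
The plan is to exploit the common support structure of $f_0^{(\ell)}$, $P_\ell g$, and of any admissible $g$ to collapse the ambient quadratic form on $\mathbb{R}^n$ down to a quadratic form on $\mathbb{R}^{|I_\ell|}$, and then to read off the reduced matrix as the Schur complement $S^{(\ell)}$ via a standard block-inversion identity.

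First I would verify the three support statements that are baked into the hypotheses: $(f_0^{(\ell)})_{J_\ell}=0$ by the very definition of $\hat I_\ell, I_\ell, J_\ell$ in (\ref{eq:I})--(\ref{eq:J}); $g_{J_\ell}=0$ for every $g\in\mathcal{G}_\ell$ by definition of $\mathcal{G}_\ell$; and $(P_\ell g)_{J_\ell}=0$ because all entries of $P_\ell$ with at least one index in $J_\ell$ vanish by assumption. Together these imply that $h:=f_0^{(\ell)}+P_\ell g$ is supported on $I_\ell$, with $h_{I_\ell}=\tilde{f}+\tilde{P}_\ell \tilde{g}$ where $\tilde{g}:=g_{I_\ell}$. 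Moreover $\|g\|^2=\|g_{I_\ell}\|^2+\|g_{J_\ell}\|^2=\|\tilde{g}\|^2$, so the map $g\mapsto\tilde{g}$ is a bijection of $\mathcal{G}_\ell$ onto the unit ball of $\mathbb{R}^{|I_\ell|}$. Thus the two feasible sets in (\ref{eq:gmax}) can be put in one-to-one correspondence, and it suffices to show that the objective values agree.

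For that last step I would partition $K(x)^{-1}$ into the four blocks indexed by $I_\ell$ and $J_\ell$ and write
\begin{equation*}
h^T K(x)^{-1} h
=\begin{bmatrix} h_{I_\ell}^T & 0 \end{bmatrix}
K(x)^{-1}
\begin{bmatrix} h_{I_\ell} \\ 0 \end{bmatrix}
= h_{I_\ell}^T\,K(x)^{-1}_{I_\ell I_\ell}\,h_{I_\ell},
\end{equation*}
the cross and $J_\ell J_\ell$ contributions dropping out because $h_{J_\ell}=0$. What remains is purely algebraic: identify the block $K(x)^{-1}_{I_\ell I_\ell}$ with the matrix $S^{(\ell)}$ defined in (\ref{eq:schur}) using the symmetric block-inversion/Schur complement identity applied to $K(x)^{-1}$. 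I expect this to be the only nontrivial obstacle, since the Schur-complement-of-an-inverse bookkeeping (and the transposition/symmetry conventions implicit in the $(IJ)/(JI)$ subscripts) is the easiest spot to slip up; once verified, substituting $h_{I_\ell}=\tilde{f}+\tilde{P}_\ell\tilde{g}$ immediately yields the right-hand side of (\ref{eq:gmax}), and the lemma follows.
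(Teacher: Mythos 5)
Your reduction in steps 1--4 is correct, and it is essentially all the content there is: the paper itself offers no proof at all (the lemma is introduced with ``We get the obvious statement''), so the support observations $(f_0^{(\ell)})_{J_\ell}=0$, $g_{J_\ell}=0$ for $g\in{\cal G}_\ell$, $(P_\ell g)_{J_\ell}=0$, the norm-preserving bijection $g\mapsto g_{I_\ell}$, and the collapse $h^TK(x)^{-1}h=h_{I_\ell}^T\bigl(K(x)^{-1}\bigr)_{I_\ell I_\ell}h_{I_\ell}$ constitute exactly the ``obvious'' argument the author had in mind. Up to that point you have proved that the left-hand side of (\ref{eq:gmax}) equals $\max_{\|\tilde g\|\leq 1}(\tilde f+\tilde P_\ell\tilde g)^T\bigl(K(x)^{-1}\bigr)_{I_\ell I_\ell}(\tilde f+\tilde P_\ell\tilde g)$.

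The gap is your final step, and it cannot be carried out: $\bigl(K(x)^{-1}\bigr)_{I_\ell I_\ell}$ is \emph{not} equal to $S^{(\ell)}$ as defined in (\ref{eq:schur}). The block-inversion identity applied to $N:=K(x)^{-1}$ says $\bigl(N^{-1}\bigr)_{I_\ell I_\ell}=\bigl(N_{I_\ell I_\ell}-N_{I_\ell J_\ell}N_{J_\ell J_\ell}^{-1}N_{J_\ell I_\ell}\bigr)^{-1}$, i.e.\ $K(x)_{I_\ell I_\ell}=\bigl(S^{(\ell)}\bigr)^{-1}$; so the Schur complement of the \emph{inverse} stiffness matrix is $S^{(\ell)}=\bigl(K(x)_{I_\ell I_\ell}\bigr)^{-1}$, whereas the matrix your steps 1--4 produce is $\bigl(K(x)^{-1}\bigr)_{I_\ell I_\ell}=\bigl(K_{I_\ell I_\ell}-K_{I_\ell J_\ell}K_{J_\ell J_\ell}^{-1}K_{J_\ell I_\ell}\bigr)^{-1}$, the inverse of the Schur complement of $K(x)$ itself. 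These coincide only when the coupling block $\bigl(K(x)^{-1}\bigr)_{I_\ell J_\ell}$ vanishes. A two-dimensional check makes the failure concrete: for $K=\left[\begin{smallmatrix}2&1\\1&2\end{smallmatrix}\right]$, $I_\ell=\{1\}$, $J_\ell=\{2\}$, $\tilde f=1$ and $\tilde P_\ell$ negligibly small, the left side of (\ref{eq:gmax}) is approximately $(K^{-1})_{11}=2/3$ while the right side is approximately $\tilde f^TS^{(\ell)}\tilde f=1/2$; since in general $\bigl(K^{-1}\bigr)_{I_\ell I_\ell}\succeq S^{(\ell)}$, the right-hand side systematically underestimates the worst case. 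In other words, what you have actually proved is the lemma with $S^{(\ell)}$ replaced by $\bigl(K(x)^{-1}\bigr)_{I_\ell I_\ell}$; the discrepancy traces to (\ref{eq:schur}) itself, which (as printed, and modulo an index-order slip that makes its middle product nonconformable) computes the matrix corresponding to clamping the degrees of freedom in $J_\ell$ rather than condensing them out---note that Step~4.1 of Algorithm~1, ``compute the Schur complement $S^{(\ell)}$ \ldots and its inverse,'' suggests an inverted Schur complement of $K(x)$ is what is really intended. As a proof of the statement as literally written, however, your argument fails at precisely the point you dismissed as routine bookkeeping: the identification you appeal to is not an identity.
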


\begin{lemma}\label{th:l1}
Let $A$ by a symmetric positive semidefinite $n\times n$ matrix and let
$\varphi\in\RR^n$ be given. The optimal value of the problem
\begin{equation}\label{eq:l11}
  \max_{\|\psi\|\leq 1} (\varphi+P\psi)^T A (\varphi+P\psi)
\end{equation}
is attained at the eigenvector $\psi_{\max}$ associated with the
largest eigenvalue $\lambda_{\max}$ of the inhomogeneous eigenvalue
problem
\begin{equation}\label{eq:l12}
  P^TAP\psi + P^TA\varphi = \lambda I \psi\,.
\end{equation}
\end{lemma}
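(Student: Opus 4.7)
The plan is to treat (\ref{eq:l11}) as a trust-region-type subproblem and exploit that $F(\psi) := (\varphi+P\psi)^T A(\varphi+P\psi)$ is a convex quadratic, because $P^TAP$ is positive semidefinite. A convex function on the convex compact unit ball attains its maximum on the boundary sphere $\|\psi\|=1$, so I can replace the inequality by an equality. Forming the Lagrangian $L(\psi,\lambda)=F(\psi)-\lambda(\psi^T\psi-1)$ and setting $\nabla_\psi L = 0$ gives
\[
P^TAP\psi + P^TA\varphi = \lambda\psi,
\]
which is precisely the inhomogeneous eigenproblem (\ref{eq:l12}). Thus every local maximizer is an eigenvector of (\ref{eq:l12}) for some multiplier $\lambda\in\RR$, and existence of $\lambda_{\max}$ is automatic since $F$ is continuous on the compact sphere.

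The main obstacle is the second half: showing that among all such critical pairs $(\psi,\lambda)$ the \emph{largest} $\lambda$ produces the global maximum. To handle this I would first pair (\ref{eq:l12}) with $\psi$ to obtain, on the sphere, the identity $\psi^T P^TAP\psi + \varphi^TAP\psi = \lambda$, and substitute this back into $F$ to get the clean formula
\[
F(\psi) = \varphi^T A\varphi + \lambda + \varphi^TAP\psi.
\]
For two critical points $\psi_1,\psi_2$ with multipliers $\lambda_1,\lambda_2$, I would then exploit symmetry of $P^TAP$: from $\psi_1^T P^TAP\psi_2 = \psi_2^T P^TAP\psi_1$ and (\ref{eq:l12}) one obtains $\varphi^TAP(\psi_1-\psi_2) = (\lambda_2-\lambda_1)\psi_1^T\psi_2$, whence
\[
F(\psi_1) - F(\psi_2) = (\lambda_1-\lambda_2)\bigl(1-\psi_1^T\psi_2\bigr).
\]
Cauchy--Schwarz on the unit sphere forces $\psi_1^T\psi_2 \leq 1$, so the right-hand side is nonnegative whenever $\lambda_1 \geq \lambda_2$. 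Consequently the eigenvector attached to $\lambda_{\max}$ attains the largest value of $F$ among all critical points, and since the global maximizer must be one of them, it is attained at $\psi_{\max}$.

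Two minor points I would verify in passing: first, that the sign of the multiplier is correct (the KKT sign condition gives $\lambda\geq 0$ for a maximizer of a convex objective on the ball, and indeed $\lambda = \psi^T P^TAP\psi + \varphi^TAP\psi$ can be made nonnegative at $\psi_{\max}$ because we can flip the sign of $\psi$ if needed); and second, the degenerate case $P^TA\varphi=0$, where (\ref{eq:l12}) reduces to the ordinary eigenproblem for $P^TAP$ and the conclusion is the classical Rayleigh characterization. Neither poses a real difficulty; the substantive content is the difference identity displayed above.
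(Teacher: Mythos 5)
Your proof is correct, and it is in fact more complete than the paper's own. The paper follows the same skeleton you start from---Lagrangian of the ball-constrained problem, first-order condition yielding the inhomogeneous eigenproblem (\ref{eq:l12})---but then disposes of the substantive remaining question, namely why the \emph{largest} eigenvalue yields the \emph{global} maximum, with the single sentence ``the rest follows from convexity of (\ref{eq:l11}).'' Convexity of the objective (positive semidefiniteness of $P^TAP$) does give boundary attainment and hence an active constraint, as you note, but by itself it does not order the critical values: at a critical point one only gets $F(\psi)=\varphi^TA\varphi+\lambda+\varphi^TAP\psi$, which is not visibly monotone in $\lambda$. Your difference identity
\[
F(\psi_1)-F(\psi_2)=(\lambda_1-\lambda_2)\bigl(1-\psi_1^T\psi_2\bigr),
\]
obtained by pairing each eigen-relation against the other eigenvector and using symmetry of $A$ and $P^TAP$, is exactly the missing monotonicity argument; combined with Cauchy--Schwarz and the fact that the global maximizer is itself a critical pair (LICQ holds on the sphere), it closes the gap. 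It is essentially the eigenvalue-ordering result of \cite{mattheij-soederlind} for inhomogeneous eigenproblems, which the paper cites only for the power method; notably, your identity is purely algebraic and never uses convexity, so this part of your argument is more general than an appeal to convexity could be. One quibble: your parenthetical claim that the multiplier can be made nonnegative ``because we can flip the sign of $\psi$'' is not right---if $(\psi,\lambda)$ solves (\ref{eq:l12}), then $(-\psi,\lambda)$ generally does not, since the inhomogeneous term $P^TA\varphi$ does not change sign with $\psi$. But this is immaterial: nonnegativity of the multiplier is never used in your main argument, and where wanted it follows directly from the KKT sign condition at a maximizer.
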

\begin{proof}
The Lagrangian of the constrained optimization problem (\ref{eq:l11})
is given by
\begin{equation*}
  {\cal L}(\psi,\lambda):=(\varphi+P\psi)^T A (\varphi+P\psi) - \lambda(\sum\psi_i^2-1)
\end{equation*}
hence the first order optimality condition reads
\begin{equation*}
  2P^TA(\varphi+P\psi) - 2\lambda I\psi = 0\,.
\end{equation*}
The rest follows from convexity of (\ref{eq:l11}).\qed
\end{proof}
Therefore, by solving the eigenvalue problem
\begin{equation}\label{eq:ev}
  P_\ell^TS^{(\ell)}\tilde{f}+P_\ell^TS^{(\ell)}P_\ell \tilde{g} = \lambda I \tilde{g}
\end{equation}
(with respect to $\tilde{g}$ and $\lambda$) we find the optimal value
of the most-inner problem in (\ref{eq:rob}) and the corresponding
maximizer. Notice that this is a low-dimensional problem, as the number
of non-zeros in $f_0^{(\ell)}$ is typically very small, as compared to
the number of degrees of freedom.

\subsection{Measuring robustness}
Assume that we have solved the original multiple-load problem
(\ref{eq:ml}) with the nominal loads $f_0^{(1)},\ldots,f_0^{(L)}$. Let
us call the optimal design $x^*$. For this design and for each load
case, let us solve the eigenvalue problem (\ref{eq:ev}) to get
eigenvectors $g^{(\ell)}_{\max}$ associated with the largest
eigenvalues $\lambda_{\max}^{(\ell)}$, $\ell=1,\ldots,L$, i.e.,
solutions of (\ref{eq:gmax}). A comparison of the optimal compliance
for the nominal loads with compliances corresponding to these
eigenvectors will give us a clear idea about the vulnerability and
robustness of the design $x^*$.
\begin{definition}\label{def:rob}
Let $x^*$ be the solution of (\ref{eq:ml}) and
\begin{equation*}
c^*:=\max\limits_{\ell=1,\ldots,L}(f_0^{(\ell)})^T
  K(x^*)^{-1}f_0^{(\ell)}
\end{equation*}
the corresponding optimal compliance.
Define
\begin{equation*}
c_{\rm rob}:= \max_{\ell=1,\ldots,L}(f_0^{(\ell)}+P_\ell
g^{(\ell)}_{\max})^T K(x^*)^{-1}(f_0^{(\ell)}+P_\ell g^{(\ell)}_{\max})\,,
\end{equation*}
where $g^{(\ell)}_{\max}$ is a solution of (\ref{eq:gmax}) for
$\ell=1,\ldots,L$. The ratio
\begin{equation*}
  {\cal V}(x^*):=\frac{c_{\rm rob}}{c^*}
\end{equation*}
is called the \emph{vulnerability} of design $x^*$ with respect to
random perturbations of the nominal loads.
\end{definition}
\begin{definition}\label{def:rob}
Design $x^*$ (solution of (\ref{eq:ml})) is \emph{robust} with respect
to random perturbations of the nominal loads if its vulnerability is
smaller than or equal to one:
\begin{equation*}
  {\cal V}(x^*)\leq 1 \,.
\end{equation*}

The design is \emph{almost robust} if
\begin{equation*}
  {\cal V}(x^*)\leq 1.05 \,.
\end{equation*}
\end{definition}
The constant $1.05$ gives a 5\% tolerance for non-robustness. Of
course, this constant is to be changed according to particular
applications.

This definition is not only important for the algorithm that follows
but on its own. It gives us the measure of quality (robustness) of a
given design, whether a result of optimization or a manual one, with
respect to random perturbations of the given loads. Furthermore, not
only it will give us an indication whether the design is (almost)
robust---if it is not, we will know by how much. The maximal
``perturbed compliance'' will show by how much our objective value can
get worse under a ``bad'' random perturbation of the given loads.

\subsection{Algorithm for robust design}
The key idea of our approach to finding a robust design is that, for
given $x$ and $\ell$ the eigenvector $f_0^{(\ell)}+P_{\ell}
g^{(\ell)}_{\max}$  \emph{represents the most dangerous load for the
design $x$ and the $\ell$-th load case} in the sense that, under this
load, the compliance is maximized. If the compliance corresponding to
this load is greater than $1.05\cdot c^*$, this load is indeed
dangerous and will be added to our set of load cases; if not, the load
is harmless for the existing design and can be ignored.

This leads to the following algorithm.

\begin{algorithm}{\rm
Finding an almost robust design.
\begin{description}
\item[{\em Step~1.}] Set $s=0$ and ${\cal
    F}=(f^{(1)},\ldots,f^{(L)})$.
\item[{\em Step~2.}] Solve the multiple-load problem (\ref{eq:ml})
  with the original set of loads ${\cal F}$.\\
  Get the optimum design $x_{(0)}$ and compute the associated
  stiffness matrix
  $K(x_{(0)})$.\\
  Compute the norm $\hat{f} = \min_{\ell=1,\ldots,L} \|f^{(\ell)}\|$.\\
  Define the uncertainty ellipsoid by setting $P_\ell$.
\item[{\em Step~3.}] Compute the compliance\\ $c_s =
    \max_{\ell=1,\ldots,L} (f^{(\ell)})^T K(x_{(s)})^{-1}
    f^{(\ell)}$.
\item[{\em Step~4.}] For each load case:
\item[{\em Step~4.1.}] Compute the Schur complement $S^{(\ell)}$
    from (\ref{eq:schur}) and its inverse.
\item[{\em Step~4.2.}] Solve the inhomogeneous eigenvalue problem
    (\ref{eq:ev}) to find the eigenvector $g^{(\ell)}_{\max}$
    associated with the largest eigenvalue.
\item[{\em Step~5.}] Find the index set ${\cal R}$ of all load
    cases with
\begin{equation*}
1.05\cdot c_s < (f_0^{(\ell)}+P_\ell g^{(\ell)}_{\max})^T
K(x_{(s)})^{-1}(f_0^{(\ell)}+P_\ell g^{(\ell)}_{\max})\,.
\end{equation*}
\item[{\em Step~6.}] If ${\cal R}=\emptyset$, then the design is
    almost robust;
  \emph{FINISH}.\\
  If not, add loads with indices $\ell\in{\cal R}$ to the existing
  set of loads
\begin{equation*}
    {\cal F} \leftarrow ({\cal F};g^{(\ell)}_{\max}),\ \ \ell\in{\cal R}.
\end{equation*}
\item[{\em Step~7.}]
  Set $s \leftarrow s+1$.\\
  Solve the problem (\ref{eq:ml}) with loads ${\cal F}$.\\
  Get the optimum design $x_{(s)}$ and compute the associated
  stiffness matrix
  $K(x_{(s)})$.\\
  \emph{Go back to Step 3}.
\end{description}
}
\end{algorithm}

In our numerical experiments, we have solved the inhomogeneous
eigenvalue problems (\ref{eq:ev}) by the power method, as described
below.

\begin{algorithm}{\rm Power method for finding the largest eigenvalue of the
inhomogeneous eigenvalue problem
\begin{equation}\label{eq:ev1}
  Ax - b = \lambda I x
\end{equation}
where $A$ is a real symmetric positive semidefinite $n\times n$ matrix
and $b\in\RR^n$.\\
For $k=1,2,\ldots$ repeat until convergence:
\begin{align}
&y_{k+1} = A x_k - b\\
&\lambda_{k+1} = x_k^Ty_{k+1}\nonumber\\
&x_{k+1} = \frac{y_{k+1}}{\|y_{k+1}\|}\,.\nonumber
\end{align}
}
\end{algorithm}
The convergence proof of the method can be found in
\cite{mattheij-soederlind}. More precisely, the authors show that
$\lambda_k$ converges to the largest eigenvalue $\lambda^*$ and $x_k$
to the associated eigenvector $x^*$, under the condition that the
operator $(I-x^*x^{*T})A(I-x^*x^{*T})/\lambda^*$ is a contraction. In
all our numerical experiments, the power method converged in less than
five iterations, therefore we have not pursued the analysis of this
operator. Furthermore, there is another simple way how to compute all
eigenvalues of (\ref{eq:ev1}), as proposed also by
\cite{mattheij-soederlind}. The problem can be converted into a
quadratic eigenvalue problem which, in turn, can be written as the
following standard (though nonsymmetric) eigenvalue problem:
\begin{equation*}
\begin{bmatrix} 0& I\\ b^Tb - AA^T & 2A\end{bmatrix}
\begin{bmatrix} x\\y\end{bmatrix} = \lambda
\begin{bmatrix} x\\y\end{bmatrix}
\end{equation*}
that can be solved by any standard algorithm. Recall again that the
dimension of these problems is typically very small. Notice that the
above eigenproblem only delivers the eigenvalues of the original
inhomogeneous problem (\ref{eq:ev1}). The associated eigenvectors can
be then computed as $x:=(A-\lambda I)^{-1}b,\ x:=x/\|x\|$.

\section{Numerical examples}\label{sec:ex}
In this section we present numerical examples for robust truss topology
optimization and robust variable thickness sheet problem. Purposely,
all examples are simple enough so that the reader can see the effect of
the robust approach. In fact, for most of our examples the reader will
just guess what the critical perturbations of the nominal loads will
look like. But that is why we have chosen these examples, in order to
show that the results obtained by the algorithm correspond to
engineering intuition. Clearly, for real world problems, the intuition
may not be that obvious.

In all examples, $P_\ell$ was chosen to define a flat uncertainty
ellipsoid around the nominal load:
\[
  {P}_\ell = T^T\begin{bmatrix}1.0\cdot 10^{-3}&0\\0&3.0\end{bmatrix}T\quad
  \mbox{for~} f_0^{(\ell)}= (a,\ b)^T
\]
with
\[
  T=\begin{bmatrix}\cos\phi&\sin\phi\\-\sin\phi&\cos\phi\end{bmatrix},\quad\phi=\arctan(b/a)
\]
see Fig.~\ref{fig:unc}-right.

All optimization problems were solved by our MATLAB based software
package PENLAB\footnote{Downloadable from
http://www.nag.co.uk/projects/penlab} \citep{penlab}.

\subsection{Truss topology optimization}
We first consider the standard ground-structure truss topology
optimization problem. For a given set of potential bars (the ground
structure), we want to find those that best support a given set of
loads. The design variables $x_i$ represent the volumes of the bars
\cite[see e.g.][]{bendsoe}. In our examples, all nodes can be connected
by a potential bar.

\begin{example}\label{ex:scaling}\upshape%
We start with a toy single-load truss topology example shown in
Figure~\ref{fig:1a}-left, together with the ground structure, the
boundary conditions and the nominal load. The obvious solution of the
minimum compliance problem is presented in Figure~\ref{fig:1a}-right; a
single bar in the horizontal direction which is extremely unstable with
respect to any vertical perturbation of the load and its vulnerability
approaches infinity. Also in Figure~\ref{fig:1a}-right we can see the
``most dangerous'' load, as computed by our algorithm. When we add this
load to the set of loads and solve the corresponding two-load problem,
we obtain an optimal design shown in Figure~\ref{fig:1b}-left. This
design is not yet robust as the vulnerability is ${\cal V}=2.25$, still
way bigger than 1.05. Hence we will add the new dangerous load, also
shown in Figure~\ref{fig:1b}-left, to the set of loads and solve a
three-load problem. The optimal design for this problem is shown in
Figure~\ref{fig:1b}-right. This time, the design is robust. For each
iteration of the algorithm, Table~\ref{tab:1} presents: the
corresponding vulnerability ${\cal V}$; maximal compliance for the
current multiple-load problem ``compl''; compliance of the current
design with respect to the nominal load ``compl$_0$''; and the
worst-case load for the previous design, starting with the nominal load
$[10.0,\ 0.0]$.
%-------------------------------------------
\begin{figure}[h]
\begin{center}
 \resizebox{0.33\hsize}{!}
   {\includegraphics{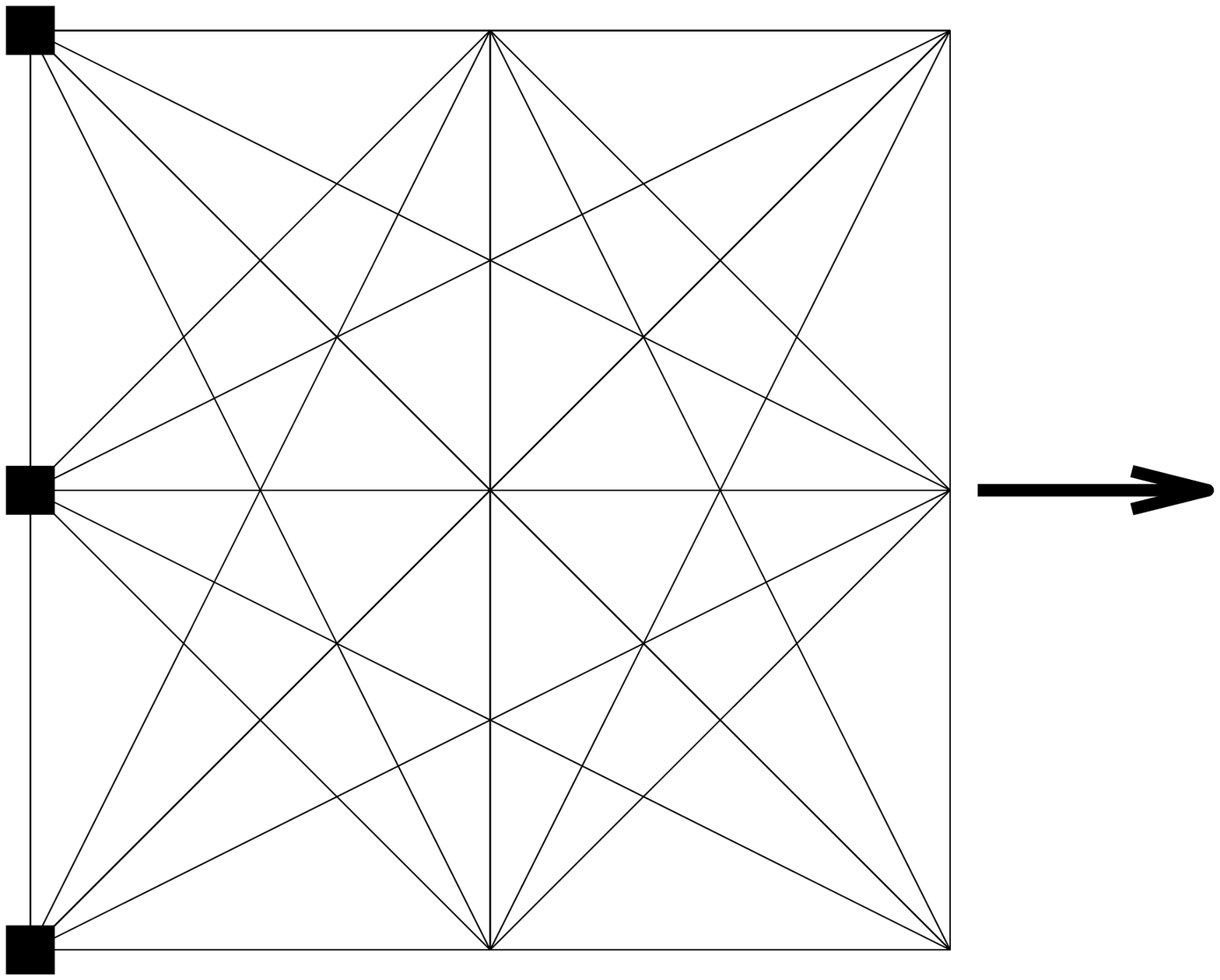}}\qquad
 \resizebox{0.33\hsize}{!}
   {\includegraphics{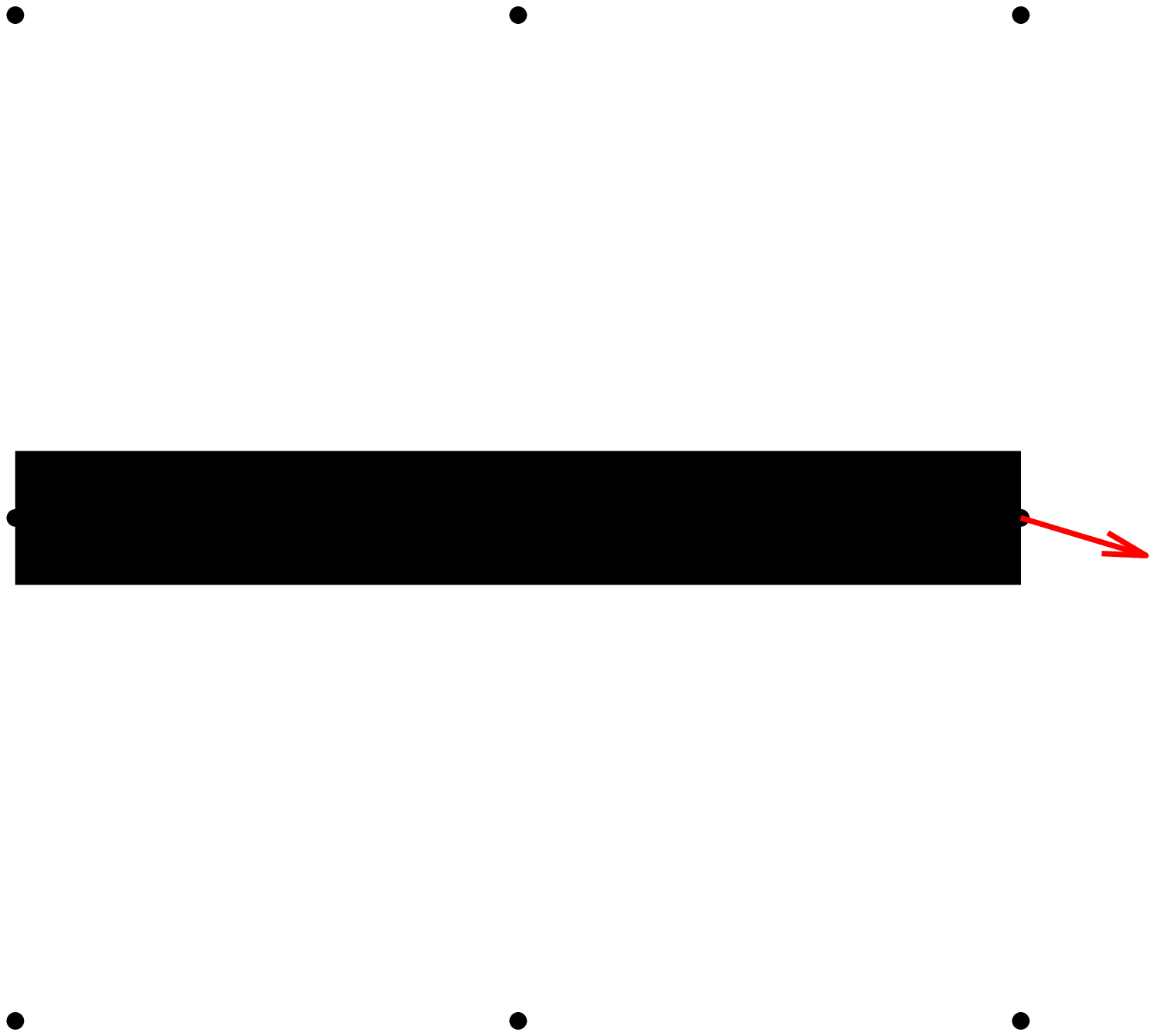}}
  \end{center}
  \caption{Example 1: ground structure, loads and boundary conditions (left)
  and the optimal design, together with the most dangerous perturbation (right).}\label{fig:1a}
\end{figure}
%-------------------------------------------
\begin{figure}[h]
\begin{center}
 \resizebox{0.35\hsize}{!}
   {\includegraphics{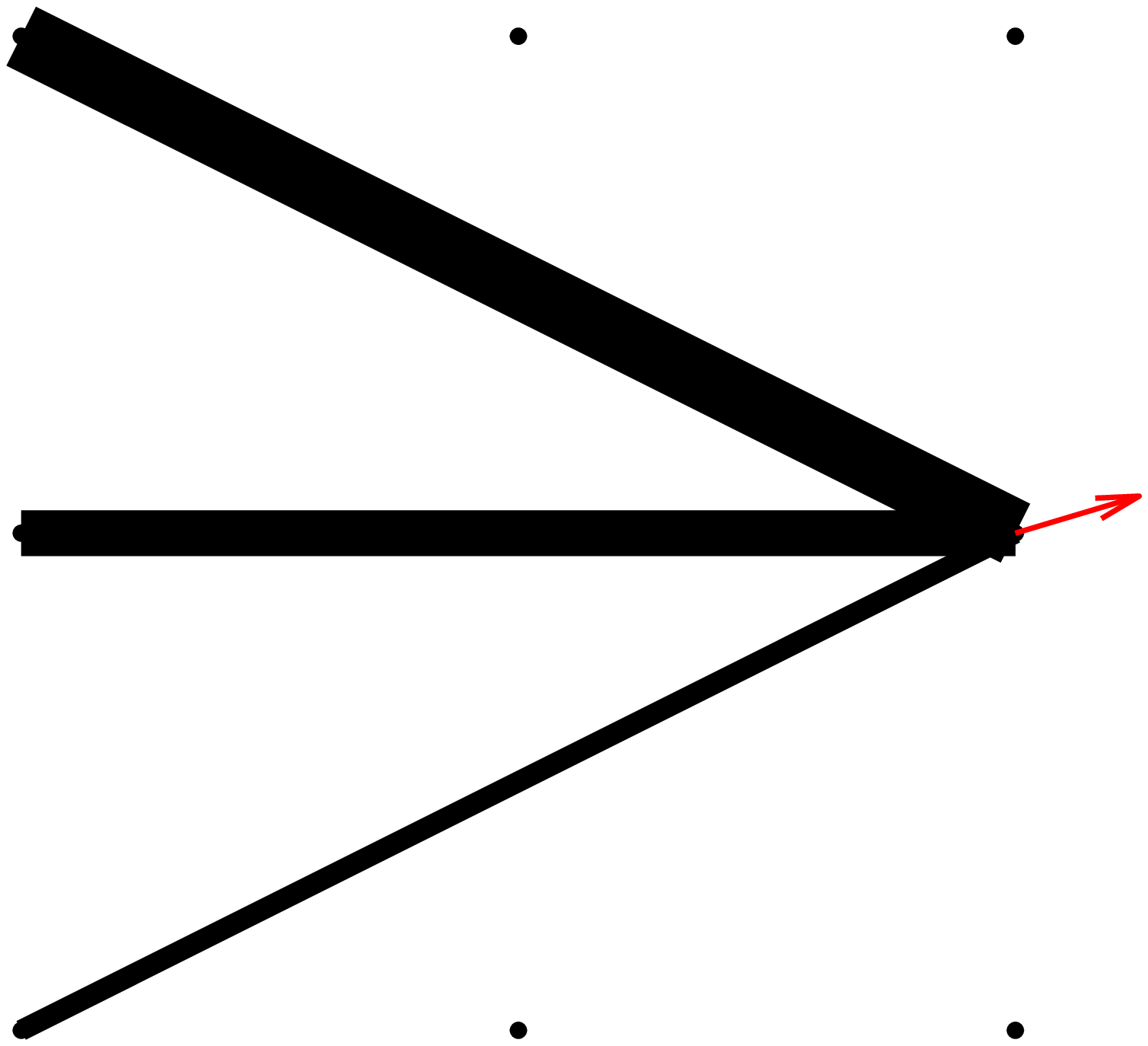}}\qquad
 \resizebox{0.33\hsize}{!}
   {\includegraphics{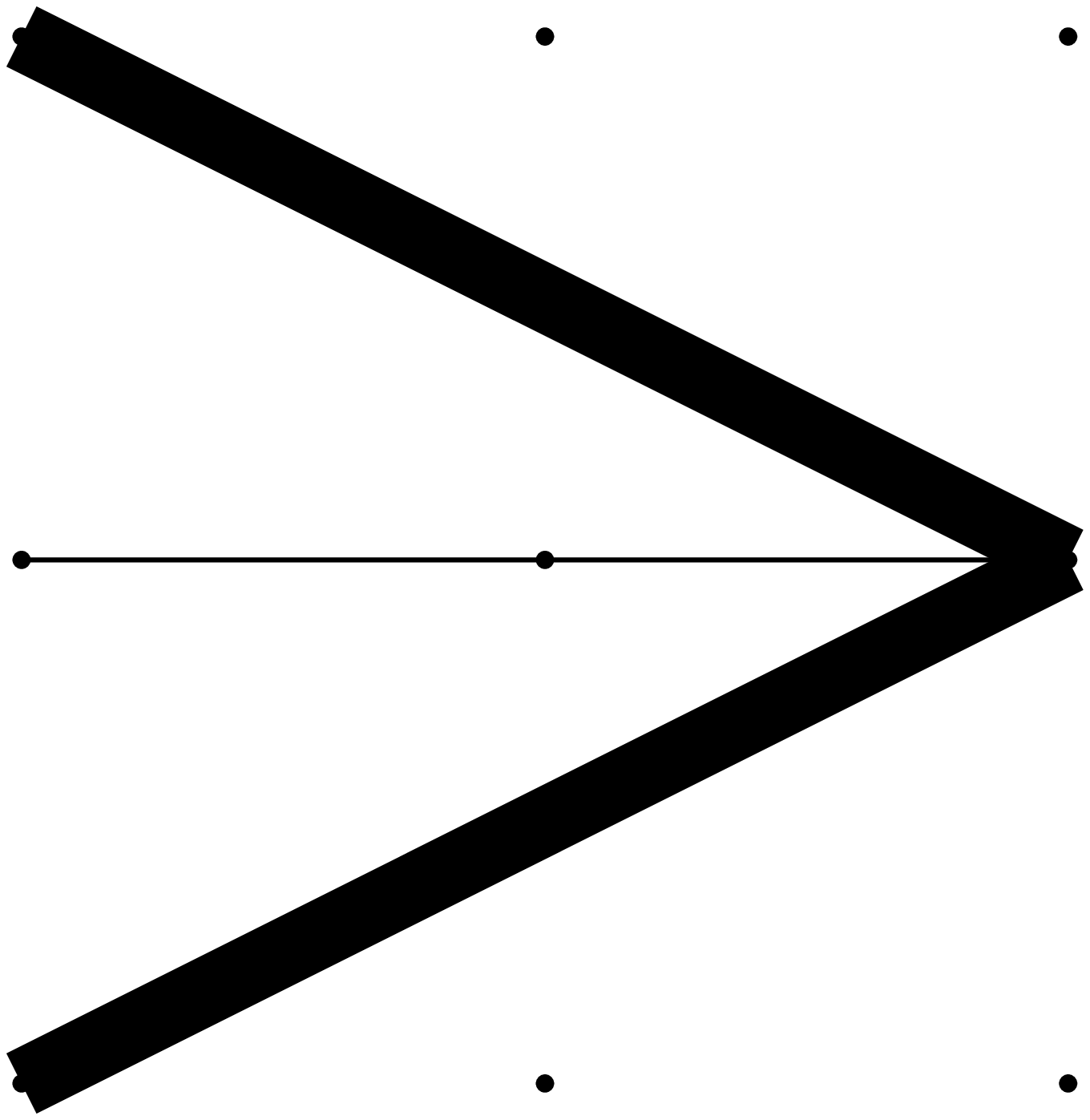}}
  \end{center}
  \caption{Example 1: Optimal design after the first iteration (left)
  and robust optimal design (right).}\label{fig:1b}
\end{figure}
%-------------------------------------------
%-------------------------------------------
\begin{table}[h]
  \centering
  \caption{Example 1: iteration count ``iter'', vulnerability ${\cal V}$,
  maximal compliance of the current problem ``compl'', compliance of
  the current design with respect to the nominal loads ``compl$_0$, and
  the worst perturbation for the previous design $f_s$.}
{\renewcommand{\arraystretch}{1.2}%
  \begin{tabular}{ccccc}
    \hline
    iter &  ${\cal V}$ &  compl & compl$_0$ & $f_s$ \\ \hline
        0&  Inf & 1.0    & 1.0       &  [10.0, 0.0] \\
        1&  2.25 & 1.46   & 1.46      &  [10.0,  3.0] \\
        2&  1.00 & 1.90   & 1.38      &  [10.0, -3.0] \\  \hline
  \end{tabular}\label{tab:1}
}%
\end{table}
%-------------------------------------------
The computed critical perturbation may seem obvious, simply the extreme
perturbation of the nominal force ``up'' and ``down''. Again, that is
why we have chosen this example, in order to show that the results
obtained by the algorithm correspond the engineering intuition.
\end{example}

\begin{example}\label{ex:scaling}\upshape%
We now consider a higher dimensional example of a long slender truss
with 55 nodes and 1485 potential bars. This is again a single-load
problem with a single horizontal force applied at the middle right-hand
side node. The optimal results of the nominal problem and of the robust
problem are shown in Fig.~\ref{fig:2a} left and right, respectively.
%-------------------------------------------
\begin{figure}[h]
\begin{center}
 \resizebox{0.45\hsize}{!}
   {\includegraphics{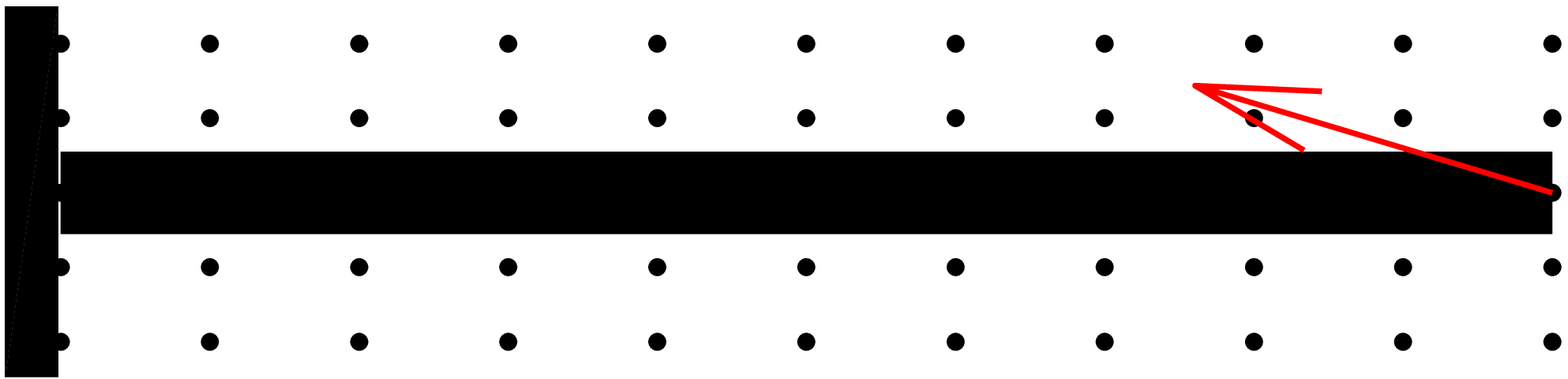}}\qquad
 \resizebox{0.45\hsize}{!}
   {\includegraphics{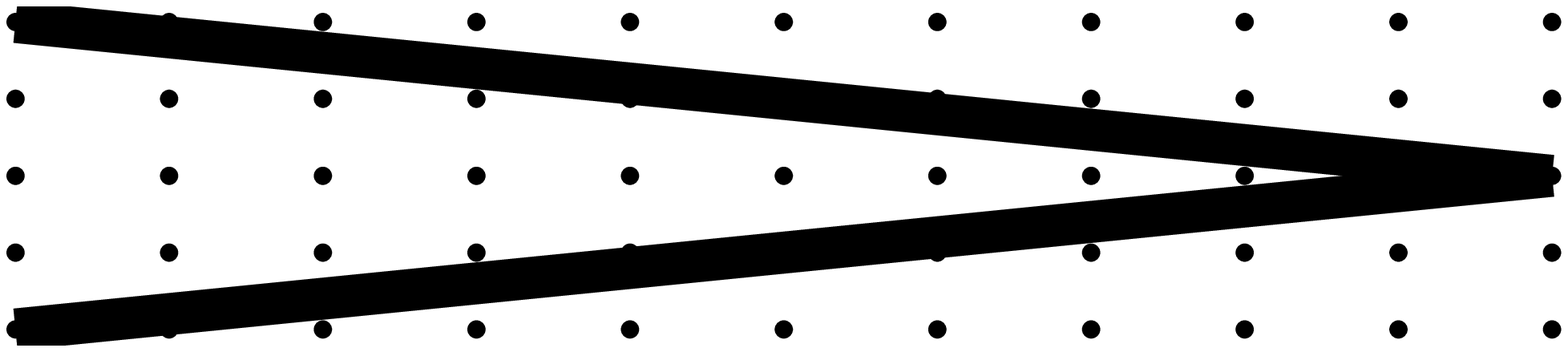}}
  \end{center}
  \caption{Example 2: optimal design for the nominal load,
  together with the most dangerous perturbation (left)
  and robust optimal design (right).}\label{fig:2a}
\end{figure}
%-------------------------------------------
The following Table~\ref{tab:2} shows that we only needed two
iterations of Algorithm~1 to obtain a robust solution.
%-------------------------------------------
\begin{table}[h]
  \centering
  \caption{Example 2, same description as in Table 1}
{\renewcommand{\arraystretch}{1.2}%
  \begin{tabular}{ccccc}
    \hline
    iter &  ${\cal V}$ &  compl & compl$_0$ & $f_s$ \\ \hline
        0&  Inf & 10.0    & 10.0       &  [10.0, 0.0] \\
        1&  3.86 & 90.17   & 64.68      &  [10.0,  3.0] \\
        2&  1.00 & 101.50   & 10.15      &  [10.0, -3.0] \\  \hline
  \end{tabular}\label{tab:2}
}%
\end{table}

\end{example}

\begin{example}\label{ex:scaling}\upshape%
Let us now solve a problem with three load cases, each on them
represented by a single force, as shown in Fig.~\ref{fig:3a}-left. The
ground structure consists of 25 nodes and 300 potential bars.
Fig.~\ref{fig:3a}-right shows the optimal structure for the nominal
loads, as well as the most dangerous perturbations of the nominal loads
for this structure. Due to the ``free'' bar in the top part, this
structure is extremely unstable with respect to perturbations and its
vulnerability tends to infinity, as shown in Table~\ref{tab:3}. After
the first iteration of Algorithm~1, we obtain the truss shown in
Fig.~\ref{fig:3b}-left. This truss is still not robust enough with
respect to the depicted load perturbations and its vulnerability is
${\cal V}=1.55$. Finally, after the second iteration of Algorithm~1, we
obtain the optimal structure shown in Fig.~\ref{fig:3b}-right. This
truss is robust with respect to allowed perturbations.
%-------------------------------------------
\begin{figure}[h]
\begin{center}
 \resizebox{0.36\hsize}{!}
   {\includegraphics{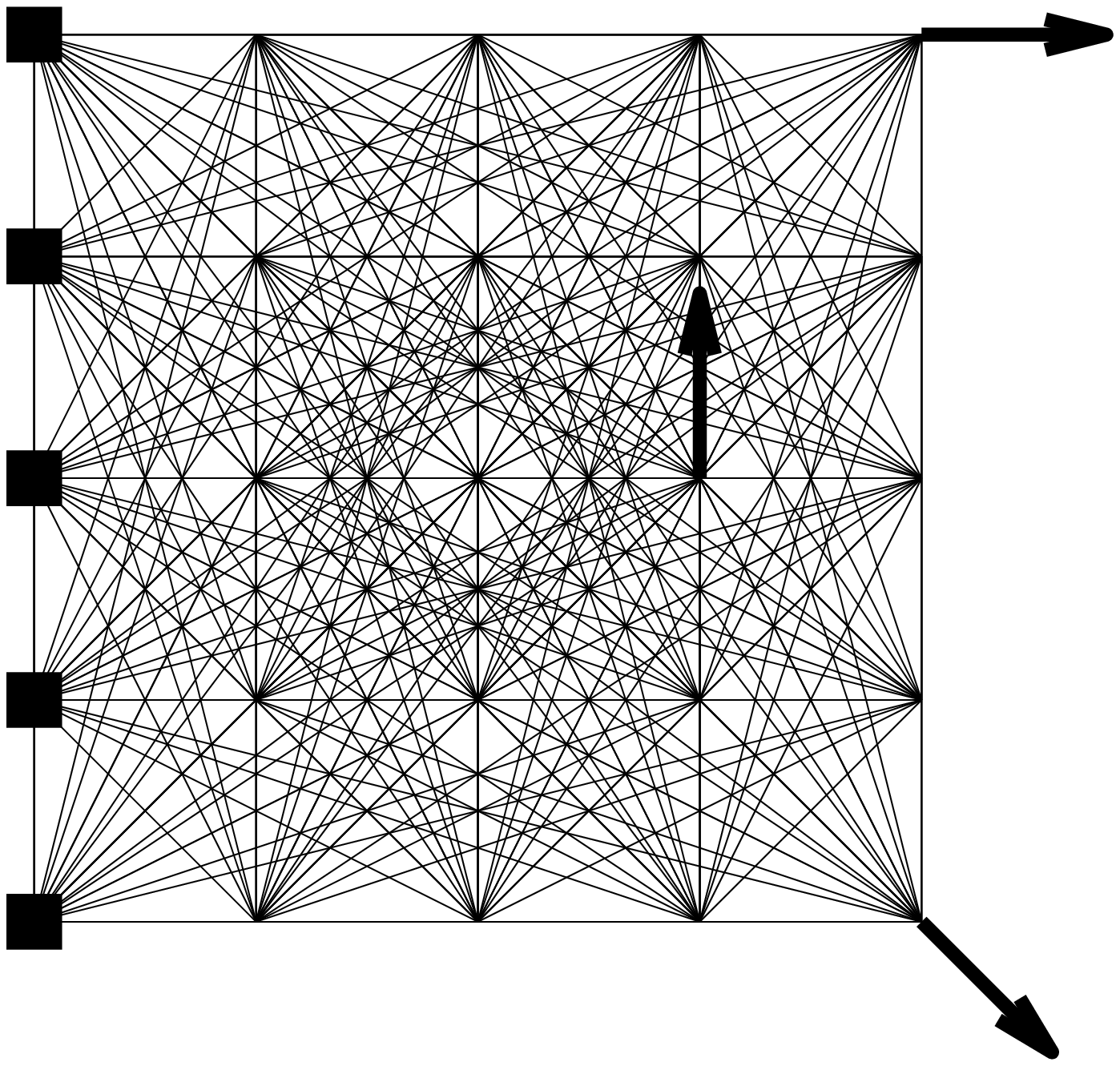}}\qquad
 \resizebox{0.33\hsize}{!}
   {\includegraphics{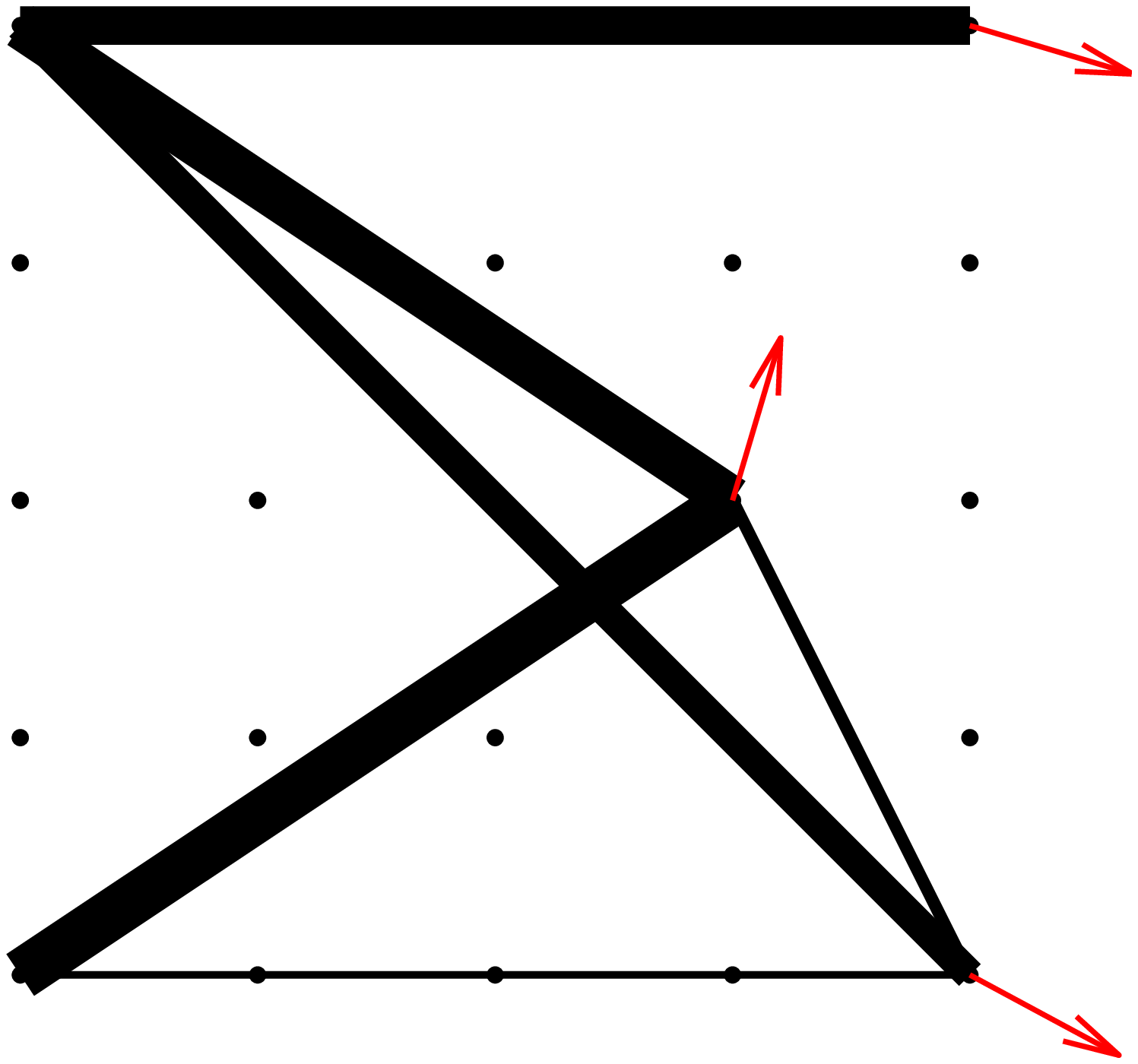}}
  \end{center}
  \caption{Example 3: ground structure, loads and boundary conditions (left)
  and the optimal design, together with the most dangerous perturbation (right).}\label{fig:3a}
\end{figure}
%-------------------------------------------
\begin{figure}[h]
\begin{center}
 \resizebox{0.33\hsize}{!}
   {\includegraphics{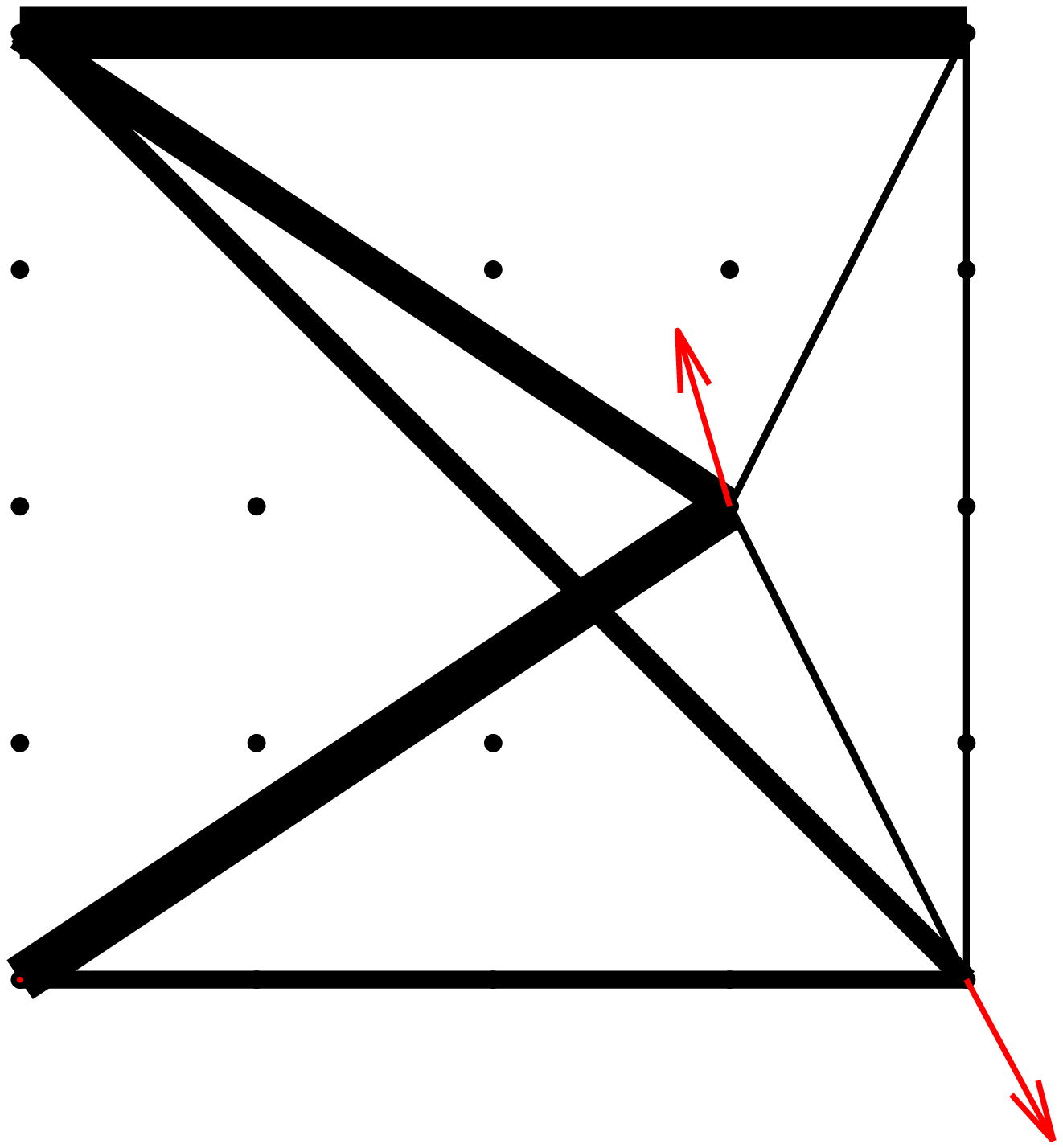}}\qquad
 \resizebox{0.33\hsize}{!}
   {\includegraphics{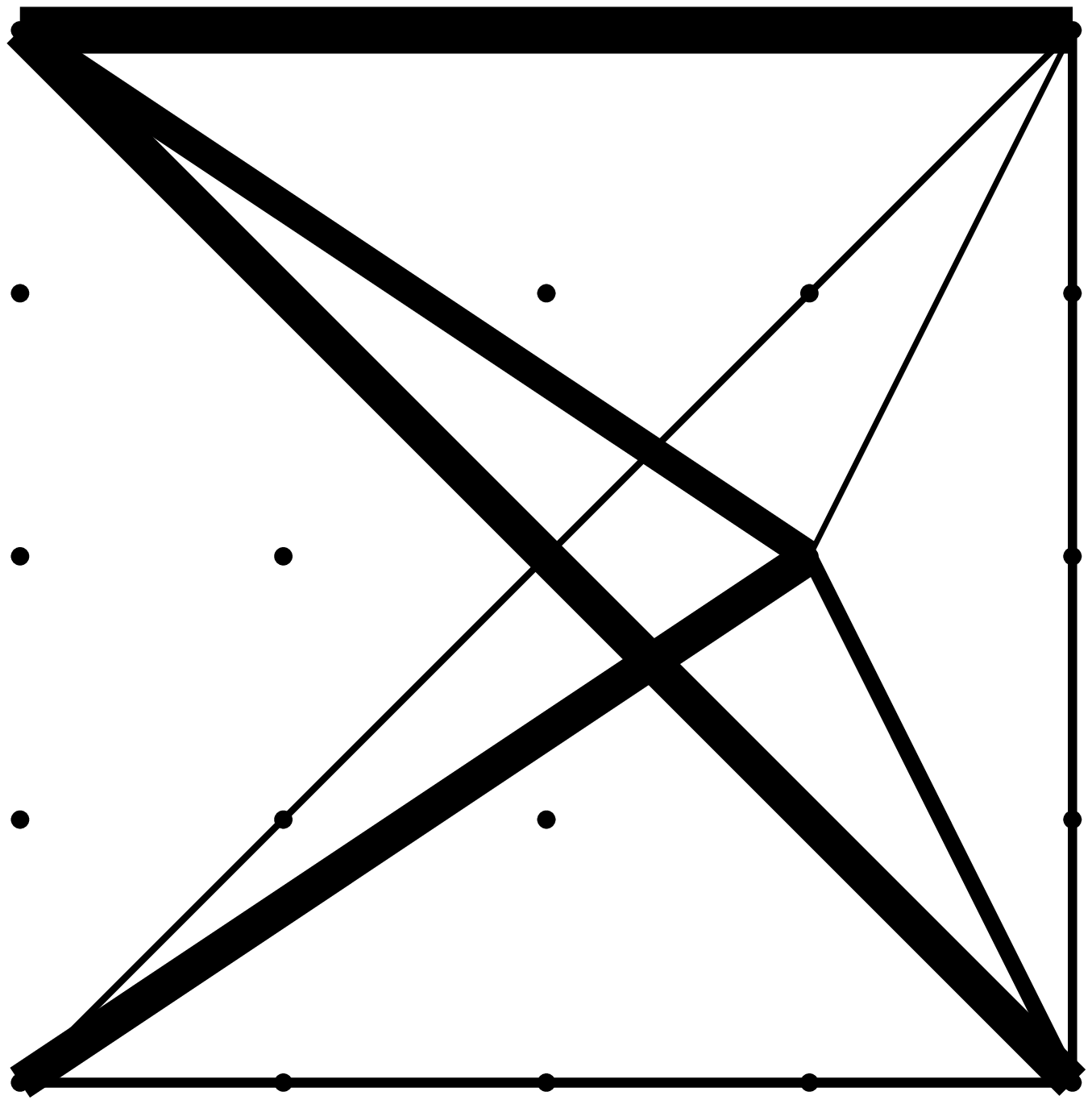}}
  \end{center}
  \caption{Example 3: Optimal design after the first iteration (left)
  and robust optimal design (right).}\label{fig:3b}
\end{figure}
%-------------------------------------------
%-------------------------------------------
\begin{table}[h]
  \centering
  \caption{Example 3, same description as in Table 1}
{\renewcommand{\arraystretch}{1.2}%
  \begin{tabular}{ccccc}
    \hline
    iter &  ${\cal V}$ &  compl & compl$_0$ & $f_s$ \\ \hline
        0&  Inf & 4.82    & 4.82       &  [10, 0]; [0, 10]; [7, -7] \\
        1&  1.55 & 6.08   & 6.08      &  [10,  -2.97]; [2.97, 10]; [9.1, -4.9]\\
        2&  1.00 & 6.61   & 6.30      &  N/A; [-2.97, 10]; [4.9, -9.1]\\  \hline
  \end{tabular}\label{tab:3}
}%
\end{table}

\end{example}

%\begin{example}\label{ex:scaling}\upshape%
%lkj
%%-------------------------------------------
%\begin{figure}[h]
%\begin{center}
% \resizebox{0.36\hsize}{!}
%   {\includegraphics{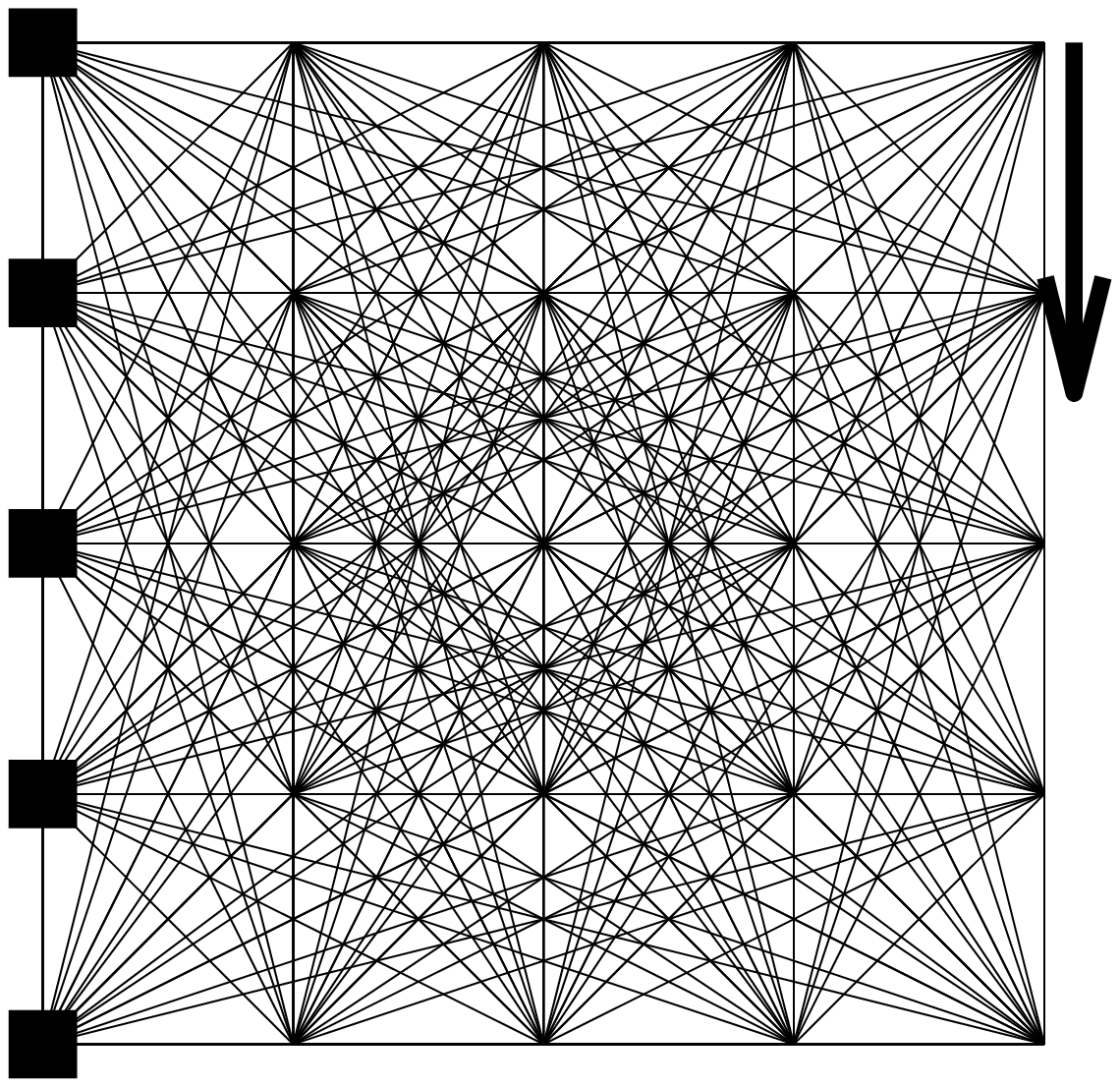}}\qquad
% \resizebox{0.33\hsize}{!}
%   {\includegraphics{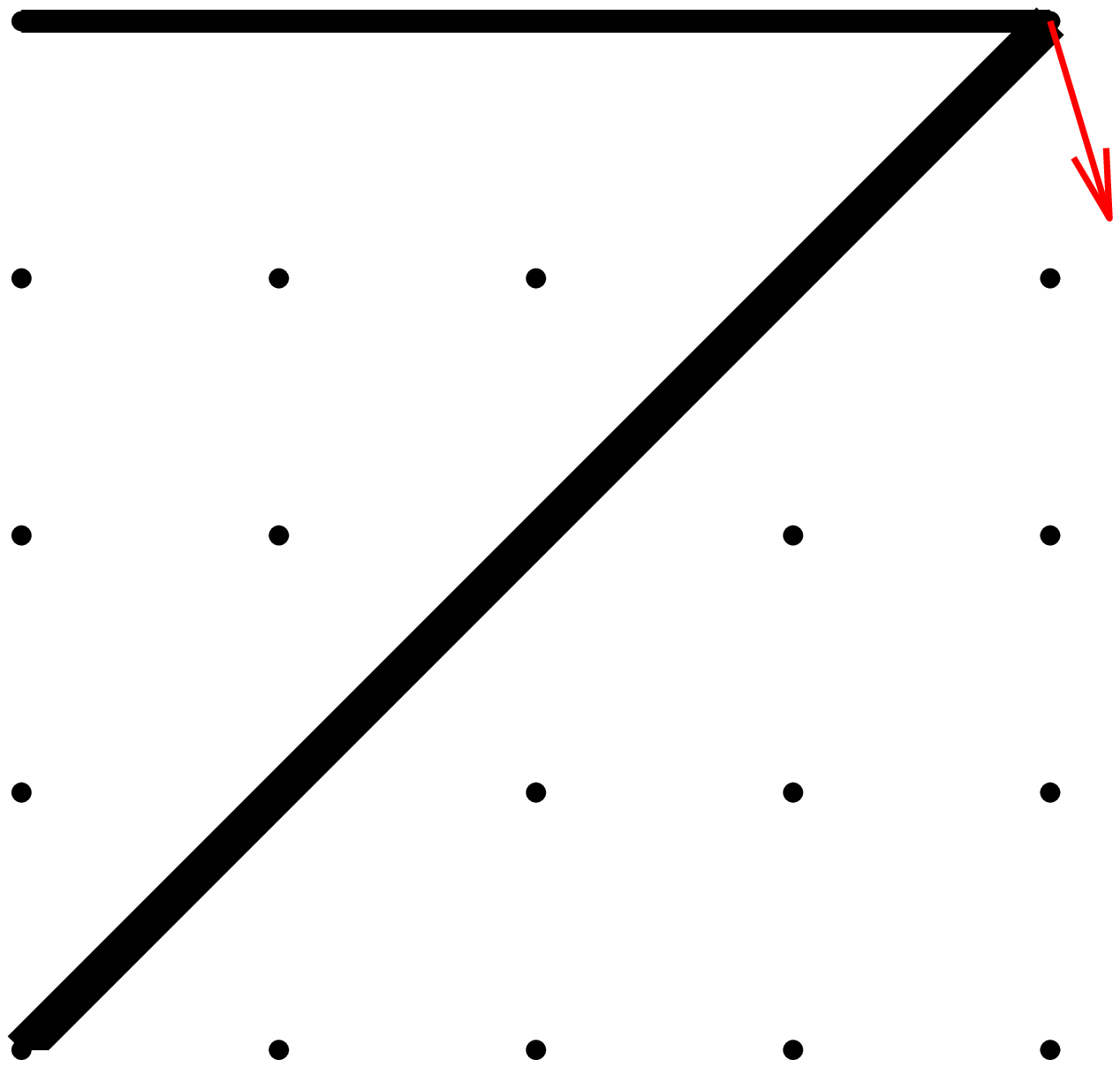}}
%  \end{center}
%  \caption{...}\label{fig:4a}
%\end{figure}
%
%%-------------------------------------------
%\begin{table}[h]
%  \centering
%  \caption{Example t5x5ff-single}
%{\renewcommand{\arraystretch}{1.2}%
%  \begin{tabular}{ccccc}
%    \hline
%    iter &  rob &  compl & compl$_0$ & $f_s$ \\ \hline
%        0&  1.26 & 7.19  & 7.19      &  [0, -10] \\
%        1&  1.0 & 8.89   & 7.31      &  [3, -10]\\   \hline
%  \end{tabular}\label{tab:4}
%}%
%\end{table}
%
%\end{example}

\subsection{Variable thickness sheet}
In the variable thickness sheet (or free sizing) problem, we consider
plane strain linear elasticity model discretized by the standard finite
element method. The design variables $x_i$ are the thicknesses of the
plate, which are assumed to be constant on each finite element; so we
have as many variables as elements. Again, the model can be found,
e.g.\ in \cite{bendsoe}.

To make the results more transparent, we consider a material with zero
Poisson ratio.

\begin{example}\label{ex:scaling}\upshape%
Consider a rectangular plate as depicted in Fig.~\ref{fig:4a}-left. The
plate is fixed on its left-hand side (by prescribed homogeneous
boundary conditions at the corresponding nodes) and subject to a
horizontal load applied to a small segment in the middle of the
right-hand side edge. Fig.~\ref{fig:4a}-right shows the optimal result
of this single load problem---a single horizontal bar (recall that the
result is due to the zero Poisson ratio). The first line in
Table~\ref{tab:4} shows that this design is far from being robust; its
vulnerability is almost 36. In the same table, in the second row, we
can see the critical perturbation of the three prescribed forces. If we
add these forces as a load number two and solve the corresponding
two-load problem, we obtain an optimal solution depicted in
Fig.~\ref{fig:4b}-left. This solution is still not robust; its
vulnerability is ${\cal V}=3.35$. But after another iteration of
Algorithm~1, we obtain a robust design shown in
Fig.~\ref{fig:4b}-right.
%-------------------------------------------
\begin{figure}[h]
\begin{center}
\begin{tikzpicture}
  [scale=1.5,auto=left]
  \draw[fill=black] (-.15,-.1) rectangle (0,1.1);
  \draw[thick] (0,0) -- (2,0) ;\draw[thick] (2,0) -- (2,1) ;
  \draw[thick] (2,1) -- (0,1) ;\draw[thick] (0,1) -- (0,0) ;
  \draw[thick,->, >=stealth ] (2,0.4) -- (2.4,0.4) ;
  \draw[thick,->, >=stealth ] (2,0.5) -- (2.4,0.5) ;
  \draw[thick,->, >=stealth ] (2,0.6) -- (2.4,0.6) ;
\end{tikzpicture}\qquad
 \resizebox{0.45\hsize}{!}
   {\includegraphics{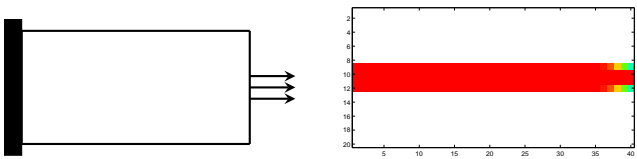}}
  \end{center}
  \caption{Example 4: computational domain, loads and boundary conditions (left)
  and the optimal design, together with the most dangerous perturbation (right).}\label{fig:4a}
\end{figure}
\begin{figure}[h]
\begin{center}
 \resizebox{0.45\hsize}{!}
   {\includegraphics{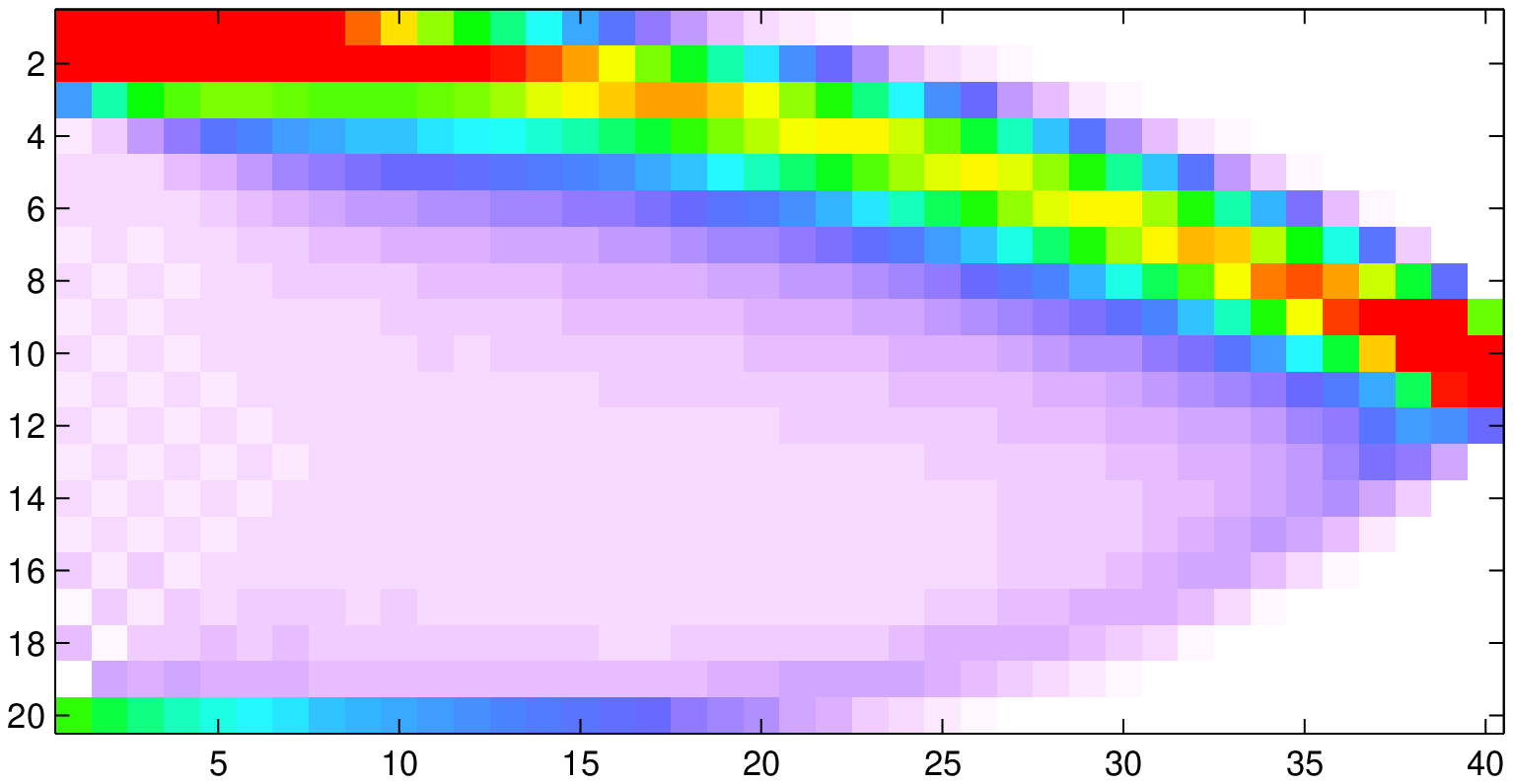}}\qquad
 \resizebox{0.45\hsize}{!}
   {\includegraphics{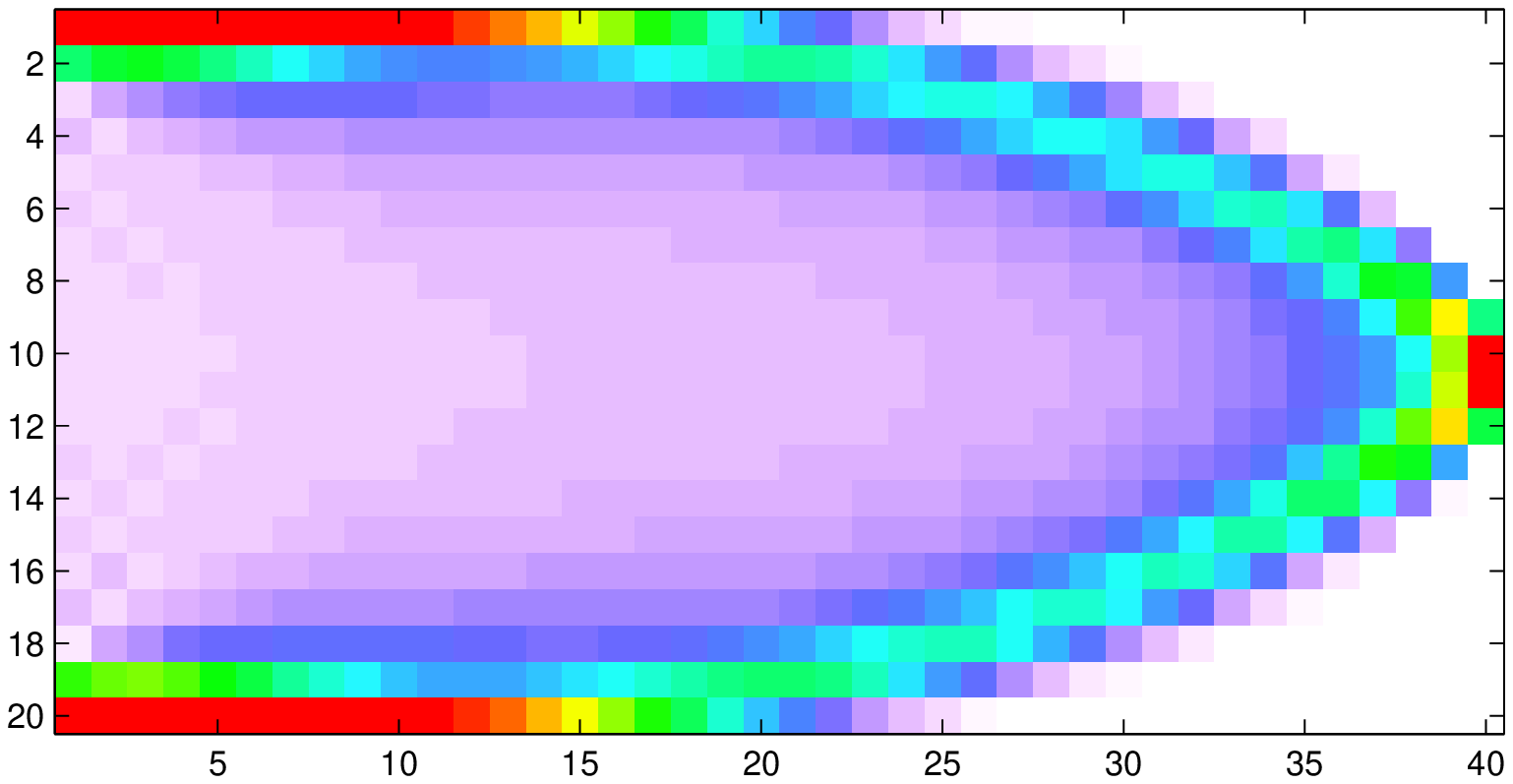}}
  \end{center}
  \caption{Example 4: Optimal design after the first iteration (left)
  and robust optimal design (right).}\label{fig:4b}
\end{figure}

%-------------------------------------------
\begin{table}[h]
  \centering
  \caption{Example 4, same description as in Table 1}
{\renewcommand{\arraystretch}{1.2}%
  \begin{tabular}{ccccc}
    \hline
    iter &  ${\cal V}$ &  compl & compl$_0$ & $f_s$ \\ \hline
        0&  35.93 & 48.88  & 48.88      &  [1, 0, 2, 0, 1, 0] \\
        1&  3.35  & 78.28   & 78.28      &  [1, 0.25, 2, 0.41, 1, 0.56]\\
        2&  1.04  & 111.80   & 56.54      &  [1, -0.42, 2, -0.42, 1, -0.43]\\   \hline
  \end{tabular}\label{tab:4}
}%
\end{table}

\end{example}

%
%\begin{acknowledgement}
%The authors would like to thank two anonymous referees for their comments helping to improve the presentation.
%This work ...
%\end{acknowledgement}

\bibliography{truss_vib_smo}

\end{document}